\newtheorem{thm}{Theorem}
\newtheorem{lem}{Lemma}
\theoremstyle{definition}
\newtheorem{rem}{Remark}
\newtheorem{conj}{Conjecture}
\newtheorem{prob}{Problem}
\renewcommand{\Re}{\mathbb R}
\DeclareMathOperator{\Id}{Id}
\DeclareMathOperator{\conv}{conv}
\DeclareMathOperator{\surf}{surf}
\DeclareMathOperator{\vol}{vol}
\begin{document}

\title[An isoperimetric problem]{An isoperimetric problem for three-dimensional parallelohedra}

\author[Z. L\'angi]{Zsolt L\'angi}

\thanks{The author is supported by the National Research, Development and Innovation Office, NKFI, K-134199, the J\'anos Bolyai Research Scholarship of the Hungarian Academy of Sciences, and grants BME IE-VIZ TKP2020 and \'UNKP-20-5 New National Excellence Program by the Ministry of Innovation and Technology.}

\address{Morphodynamics Research Group and Department of Geometry, Budapest University of Technology, Egry J\'ozsef utca 1., Budapest 1111, Hungary}  \email{zlangi@math.bme.hu}

\keywords{parallelohedra, zonotopes, discrete isoperimetric problems, tiling, Kepler's conjecture, honeycomb conjecture, Kelvin's conjecture}

\subjclass[2010]{52B60, 52A40, 52C22}

\begin{abstract}
The aim of this note is to investigate isoperimetric-type problems for $3$-dimensional parallelohedra; that is, for convex polyhedra whose translates tile the $3$-dimensional Euclidean space. Our main result states that among $3$-dimensional parallelohedra with unit volume the one with minimal mean width is the regular truncated octahedron.
\end{abstract}

\maketitle

\section{Introduction}

Among the convex polyhedra in Euclidean $3$-space $\Re^3$ most known both inside and outside mathematics, we find the so-called \emph{parallelohedra}; that is, the convex polyhedra whose translates tile space. All parallelohedra in $3$-space can be defined as the Minkowski sums of at most six segments with prescribed linear dependencies between the generating segments, and thus, they are a subclass of \emph{zonotopes}. Parallelohedra in $\Re^3$ are also related to the Voronoi cells of lattices via Voronoi's conjecture, stating (and proved in $\Re^3$, see \cite{Delone, Fedorov, Garber}) that any parallelohedron is an affine image of such a cell. Well-known examples of parallelohedra are the cube, the regular rhombic dodecahedron, and the regular truncated octahedron, which are the Voronoi cells generated by the integer, the face-centered cubic and the body-centered cubic lattice, respectively.

Many papers investigate isoperimetric problems for zonotopes (see, e.g. \cite{Bezdek1, Bezdek2, Linhart}), with special attention on the relation between mean width and volume (see \cite{Daroczy,Filliman}). On the other hand, apart from the celebrated proof of Kepler's Conjecture by Hales \cite{Hales2}, which, as a byproduct, implies that among parallelohedra with a given inradius, the one with minimum volume is the regular rhombic dodecahedron, there is no known isoperimetric-type result for parallelohedra. 

In our paper we offer a new representation of $3$-dimensional parallelohedra that might be useful to investigate their geometric properties. We use this representation to prove Theorem~\ref{thm:3}.

\begin{thm}\label{thm:3}
Among unit volume $3$-dimensional parallelohedra, regular truncated octahedra have minimal mean width.
\end{thm}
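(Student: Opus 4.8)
The plan is to linearize the problem by passing to the generators of the zonotope. Recall that the mean width $w$ is additive under Minkowski sums and that a segment of length $\ell$ in $\Re^3$ has mean width $\ell/2$; hence if a parallelohedron is written as a zonotope with generating vectors $v_1,\dots,v_n$ (with $n\le 6$), then $w=\tfrac12\sum_{i=1}^n|v_i|$. On the other hand the volume of a zonotope is the sum $\vol=\sum_{1\le i<j<k\le n}|\det(v_i,v_j,v_k)|$ over all triples of generators. Since $w$ is homogeneous of degree $1$ and $\vol$ of degree $3$, minimizing $w$ at fixed volume is the same as minimizing the similarity-invariant ratio $w^3/\vol$, and I would reformulate the theorem as the claim that this ratio attains its minimum, over all admissible generator systems, at the generators of the regular truncated octahedron, where its value is $27\sqrt2/16$.

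I would then stratify the admissible systems according to Fedorov's five combinatorial types, each prescribed by the number of generators together with the coplanarity relations forced by Venkov's belt condition (every belt must consist of four or six facets): three generators in general position (parallelepiped), four in general position (rhombic dodecahedron), four with one coplanar triple (hexagonal prism), five with the corresponding relations (elongated dodecahedron), and six of the form $v_{ij}=p_i-p_j$ indexed by the edges of $K_4$, carrying the four coplanar triples coming from its triangles (truncated octahedron). The parallelepiped and the truncated octahedron are rigid, in the sense that up to a linear map each is a single orbit (four affinely independent points $p_i$ determine the truncated octahedron up to linear equivalence), whereas the remaining three types carry genuine continuous moduli.

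For the two rigid types the minimization is clean. Writing the generators as $Lu_1,\dots,Lu_n$ with $u_i$ the generators of the regular representative and using the polar decomposition $L=QS$, one has $|Lu_i|=|Su_i|$ and $|\det L|=\det S$, so the task reduces to minimizing the convex function $g(S)=\sum_i|Su_i|$ over positive definite $S$ with $\det S$ fixed. Averaging $S$ over the (irreducibly acting) symmetry group $G$ of the regular representative leaves $g$ unchanged, since $G$ permutes the $u_i$ up to sign, and cannot increase $g$ by convexity, while concavity of $\det^{1/3}$ on the positive definite cone shows the averaged matrix $\bar S$ has determinant at least $\det S$; as $\bar S$ commutes with $G$ it is a scalar, and one concludes that the regular cube and the regular truncated octahedron minimize $w$ at fixed volume within their types. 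For the three flexible types I would first apply the same averaging to the planar zonogonal cross-sections (forcing a regular hexagon) and then reduce to a one- or two-parameter optimization in the remaining shape parameters, solved by elementary calculus; the regular hexagonal prism, for instance, is extremal at height-to-side ratio $3/2$.

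Finally I would compare the five minimal values of $w^3/\vol$, namely $27/8$ for the cube, $3\sqrt3/2$ for the rhombic dodecahedron, the explicit minima for the prism and the elongated dodecahedron, and $27\sqrt2/16$ for the truncated octahedron, each attained at the symmetric representative; since the last is the smallest, the theorem follows. I expect the genuine difficulty to lie in the flexible types, above all the elongated dodecahedron, whose moduli are essentially two-dimensional: there the symmetrization only reduces, but does not eliminate, the optimization, and one must verify that the resulting minimum strictly exceeds $27\sqrt2/16$ rather than merely approaching it. Controlling these multi-parameter minima—showing that the most symmetric member of each flexible type is genuinely extremal under the constraint of remaining a parallelohedron of that type—is the step I would expect to be the main obstacle, and the place where the paper's new representation should do the real work.
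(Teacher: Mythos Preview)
Your proposal has a genuine gap: the claim that type~(5) parallelohedra form a single orbit under $\mathrm{GL}_3$ is false. A general truncated-octahedron parallelohedron has generators of the form $\lambda_{ij}(p_i-p_j)$, $1\le i<j\le 4$, with \emph{independent} positive scalars $\lambda_{ij}$; the belt condition only forces the three generators in each hexagonal zone to be coplanar, which is automatic for any choice of the $\lambda_{ij}$. One checks easily that $\lambda_{ij}(p_i-p_j)=q_i-q_j$ is solvable for points $q_i$ only when all six $\lambda_{ij}$ coincide (the cycle conditions around each triangle of $K_4$ force this). Hence type~(5) carries five genuine moduli beyond the linear action, and your symmetrization over the octahedral group, which only compares linear images of the \emph{regular} truncated octahedron, leaves the heart of the problem untouched. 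The same issue, in milder form, also affects your treatment of type~(3): four vectors in general position in $\Re^3$ are not a single linear orbit either.

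The paper's proof confronts exactly this difficulty. It writes every parallelohedron as $\sum_{i<j}[o,\beta_{ij}\,v_i\times v_j]$ with a centered tetrahedron $T=\conv\{v_1,\dots,v_4\}$ and free $\beta_{ij}\ge 0$, so both the shape of $T$ and the six $\beta_{ij}$ are in play. The $\beta_{ij}$ are eliminated not by symmetry but by an isotropic-position argument (Petty/Giannopoulos--Papadimitrakis): at the optimum the surface area measure of the associated projection-body preimage is isotropic, which pins each $\beta_{ij}$ to $-\langle v_s,v_t\rangle\,|v_i\times v_j|$ up to a common factor. What remains is a function of $T$ alone, and two algebraic identities for centered tetrahedra (Lemma~\ref{lem:tetrahedron_identities}) together with Cauchy--Schwarz and a Lagrange-multiplier computation (Lemma~\ref{lem:computations}) finish the job. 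If you want to salvage a type-by-type argument, you would need, for type~(5), an independent mechanism to optimize over the six $\beta_{ij}$ at fixed directions---and that is precisely the step your proposal skips.
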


The structure of the paper is as follows. In Section~\ref{sec:prelim} we introduce and parametrize $3$-dimensional parallelohedra, and describe the tools and the notation necessary to prove our result. In Section~\ref{sec:thm3} we prove Theorem \ref{thm:3}. Finally, in Section~\ref{sec:remarks}, we collect additional remarks, and recall some old, and raise some new problems. 

\section{Preliminaries}\label{sec:prelim}

\subsection{Properties of three-dimensional parallelohedra}\label{subsec:par_prop}

Minkowski \cite{Minkowski} and Ven\-kov \cite{Venkov} proved that $P$ is a convex $n$-dimensional polytope for which there is a face-to-face tiling of $\Re^n$ with translates of $P$ if and only if
\begin{itemize}
\item[(i)] $P$ and all its facets are centrally symmetric, and
\item[(ii)] the projection of $P$ along any of its $(n-2)$-dimensional faces is a parallelogram or a centrally symmetric hexagon.
\end{itemize}
Dolbilin \cite{Dolbilin} showed that these properties are already necessary if we require only the existence of a not necessarily face-to-face tiling with translates of $P$. The polytopes satisfying properties (i-ii) are called \emph{$n$-dimensional parallelohedra}.

The combinatorial classes of $3$-dimensional parallelohedra are well known (cf. Figure~\ref{fig:types}). More specifically, any $3$-dimensional parallelohedron is combinatorially isomorphic to one of the following:
\begin{enumerate}
\item[(1)] a cube,
\item[(2)] a hexagonal prism,
\item[(3)] Kepler's rhombic dodecahedron (which we call a regular rhombic dodecahedron),
\item[(4)] an elongated rhombic dodecahedron,
\item[(5)] a regular truncated octahedron.
\end{enumerate}
We call a parallelohedron combinatorially isomorphic to the polyhedron in (i) a \emph{type (i) parallelohedron}.
It is also known that every parallelohedron $P$ in $\Re^3$ is a zonotope. More specifically,
a type (1)-(5) parallelohedron can be attained as the Minkowski sum of $3,4,4,5,6$ segments, respectively. A typical parallelohedron is of type (5), as every other parallelohedron can be obtained by removing some of the generating vectors of a type (5) parallelohedron.

Since every $3$-dimensional parallelohedron $P \subset \Re^3$ is a zonotope, every edge of $P$ is a translate of one of the segments generating $P$. Furthermore, for any generating segment $S$, the faces of $P$ that contain a translate of $S$ form a \emph{zone}, i.e. they can be arranged in a sequence $F_1, F_2, \ldots, F_k, F_{k+1}=F_1$ of faces of $P$ such that for all values of $i$, $F_i \cap F_{i+1}$ is a translate of $S$. By property (ii) in the list in the beginning of Section~\ref{sec:prelim}, this sequence contains $4$ or $6$ faces. In these cases we say that $S$ generates a $4$-belt or a $6$-belt in $P$, respectively.
The numbers of $4$- and $6$-belts of a type (i) parallelohedron are $3$ and $0$, $3$ and $1$, $0$ and $4$, $1$ and $4$, and $0$ and $6$ for $i=1,2,3,4,5$, respectively.

\begin{figure}[ht]
\begin{center}
\includegraphics[width=0.75\textwidth]{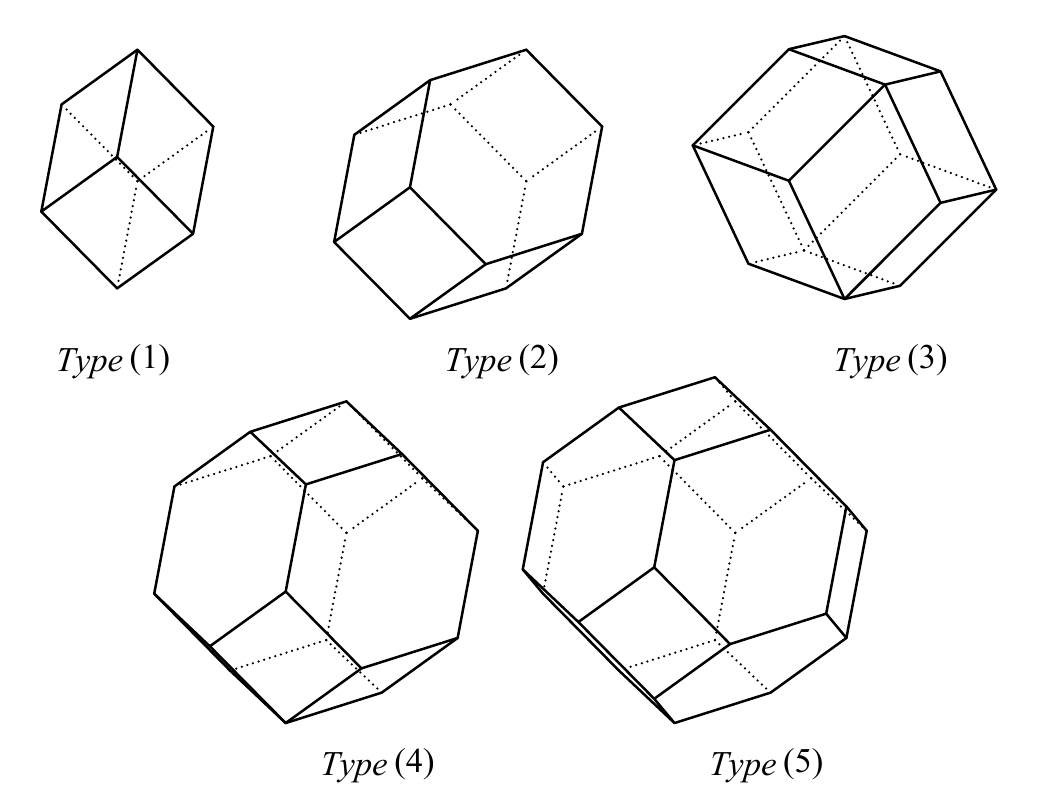}
\caption{The five combinatorial types of $3$-dimensional parallelohedra. The type (5) parallelohedron in the picture is the regular truncated octahedron generated by the six segments connecting the midpoints of opposite edges of a cube. The type (3) polyhedron is the regular rhombic dodecahedron generated by the four diagonals of the same cube. The rest of the polyhedra in the picture are obtained by removing some generating segments from the type (5) polyhedron.}
\label{fig:types}
\end{center}
\end{figure}


\begin{rem}\label{rem:quermassintegrals}
Let $P=\sum_{i=1}^m [o,v_i] \subset \Re^3$ be a parallelohedron, and set $I= \{ 1,2,\ldots, m\}$. Then the volume, the surface area and the mean width of $P$ is
\begin{equation}\label{eq:vol_par}
\vol_3(P)= \sum_{\{ i,j,k \} \subset I} |V_{ijk}|,
\end{equation}
\begin{equation}\label{eq:surf_par}
\surf(P)  = 2 \sum_{\{ i,j \} \subset I} | v_i \times v_j|, \hbox{ and}
\end{equation}
\begin{equation}\label{eq:width_par}
w(P) = \frac{1}{2} \sum_{i=1}^m |v_i|,
\end{equation}
respectively, where $V_{ijk}$ is the determinant of the matrix with column vectors $v_i,v_j,v_k$.
\end{rem}

\begin{proof}
The formula in (\ref{eq:vol_par}) is the well-known volume formula for zonotopes (cf. e.g. \cite{Shephard}).
The one in (\ref{eq:surf_par}) can be proved directly for type (5) parallelohedra, which implies its validity for all parallelohedra.
Finally, Steiner's formula \cite{Schneider} yields that the second quermassintegral $W_2(P)$ of $P$ is $\frac{1}{6}\sum_{ij} (\pi-\alpha_{ij}) l_{ij}$, where $\alpha_{ij}$ and $l_{ij}$ is the dihedral angle and the length of the edge between the $i$th and $j$th faces of $P$. Thus, from the zone property it follows that $W_2(P) = \frac{\pi}{3} \sum_{i=1}^m |v_i|$. On the other hand, for any convex body $K \subset \Re^3$ we have $w(K) = \frac{3}{2\pi} W_2(K)$, which implies (\ref{eq:width_par}).
\end{proof}

If we choose the generating segments in the form $[o,p_i]$ for some vectors $p_i \in \Re^3$, then 
in case of a type (1) or type (4) parallelohedron, the vectors $v_i$ are in general position, i.e. any three of them are linearly independent, whereas in case of the other types there are triples of vectors that are restricted to be co-planar. In particular, if $P$ is a type (5) parallelohedron, then each of the four pairs of hexagon faces defines a linear dependence relation on the generating segments. More specifically, if $v_1, v_2, v_3, v_4$ are normal vectors of the four pairs of hexagon faces of the parallelohedron, then the directions of the six generating segments are the ones defined by the six cross-products $v_i \times v_j$, $i \neq j$. Note that all triples of the $v_i$s are linearly independent, as otherwise some of the generating segments of $P$ are collinear. Thus, we have
\begin{equation}\label{eq:representation}
P = \sum_{1 \leq i < j \leq 4} [o, \beta_{ij}(v_i \times v_j)]
\end{equation}
for some real numbers $\beta_{ij}$, which, without loss of generality, we may assume to be non-negative.
Every $3$-dimensional parallelohedron can be obtained from a type (5) parallelohedron by removing some of the generating segments, or equivalently, by setting some of the $\beta_{ij}$s to be zero. In particular, $P$ is of type (5), or (4), or (3), or (2), or (1) if and only if
\begin{itemize}
\item $\beta_{ij} > 0$ for all $i \neq j$, or
\item exactly one of the $\beta_{ij}$s is zero, or
\item exactly two of the $\beta_{ij}$s is zero: $\beta_{i_1j_1} = \beta_{i_2,j_2} = 0$, and they satisfy $\{ i_1, j_1 \} \cap \{ i_2, j_2 \} = \emptyset$, or
\item exactly two of the $\beta_{ij}$s is zero: $\beta_{i_1j_1} = \beta_{i_2,j_2} = 0$, and they satisfy $\{ i_1, j_1 \} \cap \{ i_2, j_2 \} \neq \emptyset$, or
\item exactly three of the $\beta_{ij}$s are zero. and there is no $s \in \{1,2,3,4\}$ such that the indices of all nonzero $\beta_{ij}$s contain $s$, respectively.
\end{itemize}
It is easy to check that in the remaining cases $P$ is planar.

Thus, we can use this representation for all parallelohedra appearing in the paper with the assumption that $\beta_{ij} \geq 0$ for all $1 \leq i < j \leq 4$. 

Since no three of the vectors $v_i$ are linearly independent, but clearly all four of them are, up to multiplying by a constant they have a unique nontrivial linear combination $\lambda_1 v_i + \lambda_2 v_2 + \lambda_3 v_3 + \lambda_4 v_4= o$. Since in the representation of $P$ we considered only the directions of the $v_i$s, we may clearly assume that $v_1+v_2+v_3+v_4 = o$. This implies that the tetrahedron $\conv \{ v_1, v_2, v_3, v_4 \}$ is \emph{centered}, that is, its center of gravity is $o$. From the equation $v_1+v_2+v_3+v_4 = o$ it follows that the absolute values of the determinants of any three of the $v_i$s are equal. We choose this common value $1$, which yields that the volume of the tetrahedron is $\frac{2}{3}$.
Throughout the paper for any $i,j,k \in \{ 1,2,3,4 \}$, we denote by $V_{ijk}$ the value of the determinant with $v_i,v_j,v_k$ as column vectors. We choose the indices in such a way that $V_{123}=1$. Since $v_1+v_2+v_3+v_4 = o$, we have that for any $\{i,j,s,t\} = \{ 1,2,3,4\}$, the plane containing $o, v_i, v_j$ strictly separates $v_s$ and $v_t$, which implies that $V_{ijs}=-V_{ijt}$.

In the proof we often use the function $f : \Re^6 \to \Re$,
\begin{equation}\label{eq:volume}
f(\tau_{12},\tau_{13},\tau_{14},\tau_{23},\tau_{24},\tau_{34}) = \tau_{12} \tau_{13} \tau_{23} + \tau_{12}\tau_{14}\tau_{24} + \tau_{13}\tau_{14}\tau_{34} + \tau_{23} \tau_{24} \tau_{34} +
\end{equation}
\[
+ (\tau_{12}+\tau_{34})(\tau_{13}\tau_{24}+\tau_{14}\tau_{23})+ (\tau_{13}+\tau_{24})(\tau_{12}\tau_{34}+\tau_{14}\tau_{23}) + (\tau_{14}+\tau_{23})(\tau_{12}\tau_{34}+\tau_{13}\tau_{24}).
\]
We remark that $f$ is \emph{not} the third elementary symmetric function on six variables, since after expansion it has only $16 < 20 = \binom{6}{3}$ members.

An elementary computation yields that for any $i,j,k,l,s,t \in \{1,2,3,4\}$, we have
\[
|v_i \times v_j, v_k \times v_l, v_s \times v_t| = V_{ijl} V_{stk} - V_{ijk} V_{stl}.
\]
Using this formula and the properties of cross-products, we may express the volume, surface area and mean width of $P$ in our representation.

\begin{rem}\label{rem:quermass_rep}
For the parallelohedron $P=\sum_{i \neq j} \beta_{ij} [o,v_i \times v_j]$ satisfying the conditions above, we have
\begin{equation}\label{eq:vol_rep}
\vol_3(P)= f(\beta_{12},\beta_{13},\beta_{14},\beta_{23},\beta_{24},\beta_{34}),
\end{equation}
\begin{equation}\label{eq:surf_rep}
\surf(P)  = 2 \big( (\beta_{12}\beta_{13}+\beta_{12}\beta_{14}+\beta_{13} \beta_{14}) |v_1| + \ldots (\beta_{14}\beta_{24}+\beta_{14}\beta_{34}+\beta_{24} \beta_{34}) |v_4| +
\end{equation}
\[
+ \beta_{12}\beta_{34} (|v_1+v_2|)+ \ldots + \beta_{14}\beta_{23} |v_1+v_4| \big),
\]
\begin{equation}\label{eq:width_rep}
w(P) = \frac{1}{2} \sum_{1 \leq i < j \leq 4} \beta_{ij} |v_i \times v_j|,
\end{equation}
respectively.
\end{rem}

\subsection{Preliminary lemmas}

The following lemma is used more than once in the paper.

\begin{lem}\label{lem:tetrahedron_identities}
Let $T=\conv \{ p_1,p_2,p_3,p_4\}$ be an arbitrary centered tetrahedron with volume $V>0$ where the vertices are labelled in such a way that the determinant with columns $p_1,p_2,p_3$ is positive.
For any $\{i,j,s,t\} = \{1,2,3,4\}$, let $\gamma_{ij}=- \langle p_s, p_t \rangle$
and $\zeta_{ij}= \gamma_{ij} | p_i \times p_j|^2$. Then
\begin{enumerate}
\item $f(\gamma_{12},\gamma_{13},\gamma_{14},\gamma_{23},\gamma_{24},\gamma_{34}) = \frac{9}{4}V^2$,
\item $\sum_{1 \leq  i < j \leq 4} \zeta_{ij} = \frac{27}{4}V^2$,
\end{enumerate}
where $f$ is the function defined in (\ref{eq:volume}).
\end{lem}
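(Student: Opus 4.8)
The plan is to reduce both identities to the combinatorics of the Gram matrix $G=(g_{ij})_{i,j=1}^4$ with $g_{ij}=\langle p_i,p_j\rangle$. Since $T$ is centered, $p_1+p_2+p_3+p_4=o$, so every row of $G$ sums to zero; thus $G$ is symmetric, positive semidefinite of rank $3$, and its kernel is the span of the all-ones vector. For a pair $e=\{i,j\}$ write $\bar e=\{s,t\}$ for the complementary pair, so the defining relation reads $\gamma_{ij}=-g_{\bar e}$. I would first record the normalization: expanding $\det(p_2-p_1,p_3-p_1,p_4-p_1)$ and using $p_4=-(p_1+p_2+p_3)$ gives $6V=4|D|$ with $D=\det(p_1,p_2,p_3)$, so $D^2=\tfrac94V^2$; and since $D>0$ and $D^2=(\det[p_1\,p_2\,p_3])^2=\det(g_{ij})_{1\le i,j\le3}$, the target of (1) is $f(\gamma)=\det(g_{ij})_{1\le i,j\le3}$, the leading principal $3\times 3$ minor of $G$.

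For (1) the key observation is that $f$ is a sum of monomials indexed by $3$-edge subgraphs of $K_4$ on $\{1,2,3,4\}$: its first four terms are the products of $\gamma$ over the four triangles, and expanding the last three terms yields exactly the products of $\gamma$ over the twelve paths on three edges. Replacing each $\gamma_{ij}$ by $-g_{\bar e}$ amounts to applying the fixed-point-free edge-complementation involution of $K_4$, under which triangles correspond to stars and paths to paths; hence it carries the index set $\{\text{triangles}\}\cup\{\text{paths}\}$ of $f$ bijectively onto $\{\text{stars}\}\cup\{\text{paths}\}$, which is precisely the set of spanning trees of $K_4$. Therefore $f(\gamma)=\sum_{T}\prod_{e\in T}(-g_e)$, the sum being over spanning trees $T$. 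Finally I would observe that $G$ is the weighted Laplacian of $K_4$ with edge weights $-g_{ij}$: the off-diagonal entries are $g_{ij}$, and the vanishing row sums force $L_{ii}=\sum_{j\neq i}(-g_{ij})=g_{ii}$, so $L=G$. The Matrix--Tree theorem then identifies the weighted spanning-tree sum with any cofactor of $G$, in particular with $\det(g_{ij})_{1\le i,j\le3}$, giving $f(\gamma)=\tfrac94V^2$.

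For (2) I would convert $\zeta_{ij}$ into the $\gamma$-variables so as to reuse (1). Since $|p_i\times p_j|^2=g_{ii}g_{jj}-g_{ij}^2$, and since the vanishing row sums give $g_{ii}=\sum_{j\neq i}(-g_{ij})=\sum_{j\neq i}\gamma_{\overline{ij}}=:\sigma_i$ (the sum of the three $\gamma$'s on the face opposite $p_i$) while $g_{ij}=-\gamma_{\overline{ij}}$, one obtains $\zeta_{ij}=\gamma_{ij}\bigl(\sigma_i\sigma_j-\gamma_{\overline{ij}}^{\,2}\bigr)$. Summing, the assertion becomes the purely algebraic identity
\[
\sum_{1\le i<j\le 4}\gamma_{ij}\,\sigma_i\sigma_j-\sum_{1\le i<j\le 4}\gamma_{ij}\,\gamma_{\overline{ij}}^{\,2}=3\,f(\gamma),
\]
after which (1) yields $\sum_{i<j}\zeta_{ij}=3\cdot\tfrac94V^2=\tfrac{27}{4}V^2$. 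Both sides of the display are homogeneous of degree $3$ and invariant under the $S_4$-action permuting $\{1,2,3,4\}$ (hence the six pairs), so it suffices to compare coefficients on one monomial per $S_4$-orbit, equivalently to evaluate both sides on a representative of each orbit-type (for instance on the all-ones vector, on a single triangle, and on a single pair of disjoint edges); I would carry out this finite check.

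The elegant content, and the step I expect to require the most care, is the passage in (1) from $f$ to the spanning-tree sum: one must verify that the last three terms of $f$ expand to exactly the twelve path-monomials, each with coefficient one and no repetition, and that edge-complementation genuinely interchanges triangles with stars while permuting the paths among themselves. Once this combinatorial dictionary is in place, the remaining ingredients — the normalization $6V=4|D|$, the identification $L=G$, and the degree-$3$ symmetric identity underlying (2) — are routine.
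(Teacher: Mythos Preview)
Your argument is correct and takes a genuinely different route from the paper's proof. The paper proceeds by brute-force substitution: it writes $\chi_{ij}=\langle p_i,p_j\rangle$, uses $p_1+p_2+p_3+p_4=o$ to eliminate all entries involving the index $4$, and then expands $f(-\chi_{34},\ldots,-\chi_{12})$ and $\sum\zeta_{ij}$ symbolically until they visibly equal $\det(g_{ij})_{1\le i,j\le3}$ and $3\det(g_{ij})_{1\le i,j\le3}$, respectively. Your proof of (1) is more structural: recognising the sixteen monomials of $f$ as the triangles and paths of $K_4$, observing that the edge-complementation involution carries this set to the stars and paths (i.e.\ the spanning trees), and then invoking the Matrix--Tree theorem with the identification $L=G$ is an elegant explanation of \emph{why} the identity holds, not merely a verification that it does. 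Your route for (2), rewriting everything in the $\gamma$-variables and reducing to an $S_4$-invariant polynomial identity of degree $3$, is likewise cleaner than the paper's direct expansion, and has the bonus of making (2) a formal consequence of (1).

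One small correction in (2): comparing coefficients on one monomial per $S_4$-orbit is indeed sufficient, but this is \emph{not} equivalent to evaluating both sides at three test points as you suggest. There are six $S_4$-orbits of degree-$3$ monomials in the six edge-variables (cubes; squares times an adjacent edge; squares times the complementary edge; triangles; stars; paths), so the space of $S_4$-invariant cubics is six-dimensional and three evaluations cannot separate it. Either check all six orbit-coefficients directly (a short computation: both sides have coefficient $3$ on triangles and paths and $0$ on the other four orbits), or choose six linearly independent evaluation functionals. With that adjustment the finite check goes through and the proof is complete.
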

 
\begin{proof}
Let $\chi_{ij} = \langle p_i, p_j \rangle$ for all $i,j$. Consider the Gram matrix $G$ defined by $p_1,p_2,p_3$, and observe that as $T$ is centered, the volume of the parallelepiped spanned by them is $\frac{3}{2}V$. Since the determinant of the Gram matrix of $n$ linearly independent vectors in $\Re^n$ is the square of the volume of the parallelotope spanned by the vectors, it follows that $\det(G) = \frac{9}{4}V^2$.
Furthermore, since $T$ is centered, we have $\chi_{14}=-\chi_{11}-\chi_{12}-\chi_{13}$, and we may 
obtain similar formulas for $\chi_{24}$ and $\chi_{34}$. Now, substituting these formulas into $f(\gamma_{12},\ldots,\gamma_{34})=f(-\chi_{34},\ldots,-\chi_{12})$, we obtain that
\[
f(\gamma_{12},\ldots,\gamma_{34}) = \chi_{11} \chi_{22} \chi_{33} + 2 \chi_{12} \chi_{13} \chi_{23} - \chi_{11} \chi_{2}^2 - \chi_{22} \chi_{13}^2 - \chi_{33} \chi_{12}^2 = \det(G),
\]
which implies the first identity.

To obtain the second identity, observe that for any $\{i,j,s,t \} = \{ 1,2,3,4\}$, we have $\zeta_{ij} = - \langle p_s, p_t \rangle | p_i \times p_j|^2 = - \chi_{st} \left( \chi_{ii} \chi_{jj} - \chi_{ij}^2 \right)$. This observation and an argument similar to the one in the previous paragraph yields that $\sum_{1 \leq  i < j \leq 4} \zeta_{ij} = 3 \det(G) = \frac{27}{4}V^2$.
\end{proof}

\begin{lem}\label{lem:computations}
Under the condition that $\sum_{1 \leq i < j \leq 4} \tau_{ij} = C > 0$ and $\tau_{ij} \geq 0$ for all $1 \leq i < j \leq 4$, we have
\[
f(\tau_{12},\tau_{13},\tau_{14},\tau_{23},\tau_{24},\tau_{34}) \leq \frac{2C^3}{27},
\]
with equality if and only if $\tau_{ij}= \frac{C}{6}$ for all $1 \leq i < j \leq 4$.
Furthermore, if, in addition, $\tau_{34} = 0$, then
\[
f(\tau_{12},\tau_{13},\tau_{14},\tau_{23},\tau_{24},0) \leq \frac{16C^3}{243},
\]
with equality if and only if $2 \tau_{12} = \tau_{13} = \ldots = \tau_{24} = \frac{2C}{9}$.
\end{lem}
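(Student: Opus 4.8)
The plan is to maximize the explicit cubic $f$ subject to a linear equality constraint and nonnegativity, so this is a constrained optimization problem that can be attacked with Lagrange multipliers together with a case analysis on the boundary of the simplex $\{\tau_{ij}\ge 0,\ \sum \tau_{ij}=C\}$. First I would observe that $f$ is a homogeneous polynomial of degree $3$, so by scaling I may as well fix the convenient normalization $C=6$ in the first part (and $C=9$ in the second), prove the bound there, and recover the general statement by homogeneity; this removes the constant from the arithmetic. The domain is a compact simplex, so a maximum exists, and I would split the analysis into interior critical points (all $\tau_{ij}>0$) and boundary points (some $\tau_{ij}=0$).

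For the interior case I would write the Lagrange conditions $\partial f/\partial \tau_{ij} = \mu$ for all six indices. The claimed maximizer is the fully symmetric point $\tau_{ij}=C/6$, so the key structural observation is that $f$ is invariant under the full symmetry group acting on the six pairs as the symmetric group $S_4$ permutes $\{1,2,3,4\}$ (one checks from (\ref{eq:volume}) that the terms group into the ``opposite-pair'' pattern $\{12,34\},\{13,24\},\{14,23\}$, which is exactly the structure respected by $S_4$). At the symmetric point all six partials coincide by symmetry, confirming it is a critical point, and evaluating $f$ there gives $6\cdot(C/6)^3 + \text{(cross terms)}$; a short computation yields the value $2C^3/27$. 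The heart of the first part is then to rule out any other interior critical point giving a larger value, or more cleanly, to verify that $2C^3/27$ is actually the global maximum and that equality forces full symmetry.

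Rather than solving the stationarity equations in full generality, I expect the cleanest route is to bound $f$ directly by a symmetric inequality. I would try to show $f(\tau) \le \tfrac{2}{27}\big(\sum \tau_{ij}\big)^3$ as a polynomial inequality valid for all nonnegative $\tau_{ij}$, for instance by writing $f$ in terms of the power sums or by comparing $f$ against the third elementary symmetric polynomial $e_3$. Since the remark in the excerpt notes that $f$ has only $16$ of the $20$ monomials of $e_3$ (it is missing exactly the three ``opposite-pair product'' monomials $\tau_{12}\tau_{34}\cdot(\ldots)$-type triples and one more), I would make this precise: $f = e_3 - (\tau_{12}\tau_{13}\tau_{24}\text{-type missing terms})$, and then use the classical bound $e_3 \le \binom{6}{3}(C/6)^3 = 20(C/6)^3$ together with an AM--GM control of the missing terms to land exactly on $2C^3/27 = 20(C/6)^3 - (\text{correction})$. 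The main obstacle is getting the missing-monomial bookkeeping exactly right so that the corrections assemble into a clean nonnegative remainder vanishing precisely at the symmetric point; this is where I anticipate the real work.

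For the second part I set $\tau_{34}=0$ and repeat the argument on the five remaining variables with $\sum_{(i,j)\ne(3,4)}\tau_{ij}=C$. Dropping the $\tau_{34}$ terms simplifies $f$ considerably: every monomial containing $\tau_{34}$ disappears, leaving a cubic in five variables whose symmetry is reduced (the pairs $\{1,2\}$ and $\{3,4\}$ now play asymmetric roles, which is exactly why the claimed optimizer is \emph{not} fully symmetric but satisfies $2\tau_{12}=\tau_{13}=\tau_{14}=\tau_{23}=\tau_{24}$). I would again apply Lagrange multipliers on the reduced simplex, and the asymmetry in the answer tells me the stationarity equation for $\tau_{12}$ differs in form from those for the other four variables; imposing $\tau_{13}=\tau_{14}=\tau_{23}=\tau_{24}=:s$ and $\tau_{12}=:s/2$ by the expected symmetry, I reduce to a single-variable check, evaluate $f$ at $s=2C/9$, and obtain $16C^3/243$. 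The subtlety here, and the likely second obstacle, is confirming that this symmetric-up-to-the-special-pair configuration really is the global maximum on the reduced simplex rather than merely a critical point, which again I would settle either by a direct polynomial inequality or by checking the boundary faces (where further $\tau_{ij}$ vanish) give strictly smaller values.
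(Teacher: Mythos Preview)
Your overall framework---compactness of the simplex, Lagrange multipliers in the interior, then a boundary case analysis---is exactly what the paper does. Where you diverge is in the execution of the interior case: you propose to abandon the Lagrange system in favor of a direct polynomial inequality comparing $f$ with $e_3$. The paper instead pushes the Lagrange computation through, and it turns out to be quite short. The key observation you are missing is Euler's identity: since $f$ is homogeneous of degree $3$, at a stationary point with common partial $t$ one has $t=\sum_{ij}\tau_{ij}\,\partial_{\tau_{ij}}f=3f(\tau)$. The paper then solves the three equations $\partial_{\tau_{1i}}f=3f(\tau)$ for $\tau_{12},\tau_{13},\tau_{14}$, substitutes back, and obtains $f(\tau)=\tfrac{4}{27}(\tau_{23}+\tau_{24}+\tau_{34})$; by symmetry three more such identities hold, and summing gives $4f(\tau)=\tfrac{8}{27}C$. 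The equality analysis is then a short algebraic step. So the detour you anticipate as ``the real work'' is unnecessary.

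Your proposed $e_3$ route also contains a concrete error that would derail the bookkeeping you flag as the main obstacle. The four monomials of $e_3$ absent from $f$ are not ``opposite-pair'' products; they are the four \emph{vertex-star} products
\[
\tau_{12}\tau_{13}\tau_{14},\quad \tau_{12}\tau_{23}\tau_{24},\quad \tau_{13}\tau_{23}\tau_{34},\quad \tau_{14}\tau_{24}\tau_{34},
\]
i.e.\ the product of the three edges through a fixed vertex of the tetrahedron. Every monomial containing an opposite pair such as $\tau_{12}\tau_{34}$ \emph{does} appear in $f$. With the wrong missing terms, the Maclaurin/AM--GM correction you sketch cannot assemble correctly, and even with the right ones it is not clear the resulting inequality $e_3-g\le \tfrac{2}{27}C^3$ (with $g$ the sum of the four star products) is any easier than the original problem, since $g$ can vanish on large faces of the simplex while $e_3$ stays near its maximum.

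For the second part your plan coincides with the paper's: set $\tau_{34}=0$, run Lagrange on the five remaining variables, and exploit the reduced symmetry. The paper carries this out by factorizing successive differences $\partial_{\tau_{13}}g-\partial_{\tau_{14}}g$, $\partial_{\tau_{23}}g-\partial_{\tau_{24}}g$, etc., each factorization forcing one more equality among the variables until only the claimed configuration remains. The boundary faces with two or more zero coordinates are then dispatched by the same method, yielding strictly smaller maxima ($\tfrac{1}{16}C^3$, $\tfrac{1}{27}C^3$, $0$).
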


\begin{proof}
Without loss of generality, we assume that $C=1$.
Since the set of points in $\Re^6$ satisfying the conditions of Lemma~\ref{lem:computations} is compact, $f$ attains its maximum on it.
Assume that $f$ is maximal at some point $\tau=(\tau_{12}, \ldots, \tau_{34})$.

\emph{Case 1}, $\tau$ has only positive coordinates. By the Lagrange multiplier method, the gradient of $f$ at $\tau$ is is parallel to the gradient of the function $\sum_{i \neq j} \tau_{ij}-1$ defining the condition. In other words, all partial derivatives of $f$ are equal.
Let us assume that this common value is some $t \in \Re$.
Since $f$ is a linear function of any of its variables, $\partial_{\tau_{ij}} f$ is equal to the coefficient of $\tau_{ij}$ in $f$. On the other hand, any member of the sum in $f$ contributes to partial derivatives with respect to three of the variables. Thus, we have that
$t = \sum_{i \neq j} t \tau_{ij} = \sum_{i \neq j} (\partial_{\tau_{ij}} f)(\tau) \tau_{ij} = 3 f(\tau)$.

Computing the partial derivatives, we have that
\[
(\partial_{\tau_{12}} f) (\tau)= \tau_{13} \tau_{23} + \tau_{14} \tau_{24} + \tau_{13} \tau_{24} + \tau_{14} \tau_{23} + (\tau_{13}+\tau_{24} + \tau_{14}+\tau_{23}) \tau_{34},
\]
and we obtain similar expressions for all partial derivatives. Solving the system of equations
$(\partial_{\tau_{1i}} f)(\tau) = 3 f(\tau)$, $i =2,3,4$ for $\tau_{12}, \tau_{13}, \tau_{14}$, and substituting back the solutions into the definition of $f$, we obtain that $f(\tau) = \frac{4}{27}(\tau_{23}+\tau_{24}+\tau_{34})$.
By a similar argument,
$f(\tau)=\frac{4}{27}(\tau_{12}+\tau_{13}+\tau_{23}) = \frac{4}{27}(\tau_{12}+\tau_{14}+\tau_{24}) = \frac{4}{27}(\tau_{13}+\tau_{14}+\tau_{34})$ also follows.
Adding up, we obtain that in this case $4f(\tau) = \frac{8}{27} \sum_{i \neq j} \tau_{ij} = \frac{8}{27}$, implying that $f(\tau)= \frac{2}{27}$.

We show that in case of equality all coordinates of $\tau$ are equal.
The equations in the previous paragraph yield that $\tau_{12}+\tau_{13}+\tau_{23} = \ldots = \tau_{23}+\tau_{24}+\tau_{34} = \frac{1}{2}$, and also that
$\tau_{12}+\tau_{13}+\tau_{14} = \ldots = \tau_{14}+\tau_{24}+\tau_{34} = \frac{1}{2}$.
Comparing suitable equations it readily follows that $\tau_{12}=\tau_{34}$, $\tau_{13}=\tau_{24}$ and $\tau_{14}=\tau_{23}$.
Replacing $\tau_{34}$, $\tau_{24}$ and $\tau_{23}$ by $\tau_{12}, \tau_{13}$ and $\tau_{14}$, respectively, in the equations $(\partial_{\tau_{12}} f) (\tau)= (\partial_{\tau_{13}} f)(\tau) = (\partial_{\tau_{14}} f)(\tau)$, we obtain $\tau_{12}^2 = \tau_{13}^2 = \tau_{14}^2$, which characterizes equality in this case.

\emph{Case 2}, if exactly one coordinate of $\tau$ is zero. Without loss of generality, we may assume that $\tau_{34} = 0$, and let $\tau' \in \Re^5$ be the vector obtained by removing the last coordinate of $\tau$. 
Let us define the function $g(\tau_{12}, \ldots, \tau_{24})= f(\tau_{12}, \ldots, \tau_{24},0)$. Similarly like in Case 1, if $f(\tau)$ is a local maximum, we have that
all five partial derivatives of $g$ are equal at $\tau'$. Again, from this it follows that this common value is equal to $3g(\tau')$. Factorizing the left-hand sides of the equations $\partial_{\tau_{13}}-\partial_{\tau_{14}}=0$ and $(\partial_{\tau_{23}}g)(\tau')-(\partial_{\tau_{24}}g)(\tau')=0$, we obtain that $\tau_{13}=\tau_{14}$, $\tau_{23}=\tau_{24}$, respectively. After substituting these equalities into the equation $(\partial_{\tau_{13}}g)(\tau')-(\partial_{\tau_{23}}g)(\tau')=0$ and factorizing the left-hand side, we obtain that $\tau_{13}=\tau_{23}$. Again, substituting back into $(\partial_{\tau_{12}}g)(\tau')-(\partial_{\tau_{13}} g)(\tau')=0$ and factorizing its left-hand side yields $\tau_{13}= 2 \tau_{12}$. From this, we have $\tau_{12}= \frac{1}{9}$ and $\tau_{ij}=\frac{2}{9}$ for any $i \in \{1,2\}$ and $j \in \{ 3,4\}$, which implies that $g(\tau')=  \frac{16}{243}$.

\emph{Case 3}, if at least two coordinates of $\tau$ are zero. Then, using an argument similar to that in the previous cases, we obtain that
the maximum of $f(\tau)$ is $\frac{1}{16}$, $\frac{1}{27}$ and $0$ if exactly two, exactly three or more than three coordinates of $\tau$ are zero, respectively.
\end{proof}

\section{Proof of Theorem~\ref{thm:3}}\label{sec:thm3}

Recall from Subsection~\ref{subsec:par_prop} that we represent $P$ in the form
\[
P= \sum_{1 \leq i < j \leq 4} [o,\beta_{ij} v_i \times v_j]
\]
for some  $v_1, v_2, v_3, v_4 \in \Re^3$ satisfying $\sum_{i=1}^4 v_i = o$, and for some $\beta_{ij} \geq 0$. By our assumptions, $|V_{ijk}| = 1$ for any $\{i,j,k \} \subset \{ 1,2,3,4\}$, where $V_{ijk}$ denotes the determinant with $v_i,v_j,v_k$ as columns, and we assumed that $V_{123}=1$, which implies, in particular, that $V_{124}=-1$.
Then $T=\conv \{ v_1, v_2, v_3, v_4 \}$ is a centered tetrahedron, with volume $\vol_3(T)=\frac{2}{3}$.

By Remark~\ref{rem:quermass_rep}, we need to find the minimum value of $w(P) = \frac{1}{2} \sum_{1 \leq i < j \leq 4} \beta_{ij} |v_i \times v_j|$
under the conditions that $T= \conv \{ v_1, v_2, v_3, v_4 \}$ is a centered tetrahedron with volume $\vol_3(T)=\frac{2}{3}$, and the value of $f(\beta_{12},\ldots,\beta_{34})$ being fixed. Equivalently, we need to find the maximum value of $f(\beta_{12},\ldots,\beta_{34})$ under the condition that $w(P) = \frac{1}{2} \sum_{1 \leq i < j \leq 4} \beta_{ij} |v_i \times v_j|$ is fixed, where $T= \conv \{ v_1, v_2, v_3, v_4 \}$ is an arbitrarily chosen centered tetrahedron with volume $\vol_3(T)=\frac{2}{3}$.

Our main goal is to reduce this optimization problem to the special case where $T$ is a regular tetrahedron, and then apply Lemma~\ref{lem:computations}.
We do it in two steps. In the first step we find an upper bound for $f$ depending only on $T$ (and thus, we eliminate all $\beta_{ij}$s from the conditions). In the second step we show that our upper bound from the first step can be bounded from above by a value of $f$ under the additional condition that $T$ is regular (or in other words, we eliminate the tetrahedron $T$ from the conditions at the price of bringing back the parameters $\beta_{ij}$).

\emph{Step 1}.\\
Our main tool in this step is Lemma~\ref{lem:isotropic}, proved by Petty \cite{Petty} and later rediscovered by Giannopoulos and Papadimitrakis \cite{Giannopoulos} for arbitrary convex bodies (see (3.4) in \cite{Giannopoulos}). Before stating it, we remark that any affine image of a parallelohedron is also a parallelohedron.

\begin{lem}\label{lem:isotropic}
Let $P \subset \Re^n$ be a convex polytope with outer unit facet normals $u_1, \ldots, u_k$. Let $F_i$ denote the $(n-1)$-dimensional volume of the $i$th facet of $P$. Then, up to congruence, there is a unique volume preserving affine transformation $L$ such that $\surf(L(P))$ is maximal in the affine class of $P$. Furthermore, $P$ satisfies this property if and only if its surface area measure is isotropic, that is, if
\begin{equation}\label{eq:isotropic}
\sum_{i=1}^k \frac{n F_i}{\surf(P)} u_i \otimes u_i = \Id
\end{equation}
where $\Id$ denotes the identity matrix.
\end{lem}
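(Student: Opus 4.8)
The plan is to turn the statement into a smooth optimization problem over the group $SL(n)$ and to extract the isotropy relation~\eqref{eq:isotropic} from the first-order optimality condition. Surface area is translation invariant, and every volume-preserving affine map is a translate of a linear map $A$ with $|\det A|=1$, so it is enough to study $A\mapsto\surf(A(P))$ on $\{A:|\det A|=1\}$. First I would record the transformation rule for facets: the facet of $P$ with unit normal $u_i$ and $(n-1)$-volume $F_i$ is carried by $A$ onto a facet of $A(P)$ with unit normal $w_i=A^{-T}u_i/|A^{-T}u_i|$ and $(n-1)$-volume $|\det A|\,F_i\,|A^{-T}u_i|$. Hence, for $|\det A|=1$, we have $\surf(A(P))=\sum_{i=1}^k F_i\,|A^{-T}u_i|$; setting $Q=(AA^{T})^{-1}$, a positive definite matrix with $\det Q=1$, this reads $\surf(A(P))=\sum_{i=1}^k F_i\,\langle u_i,Qu_i\rangle^{1/2}$, and the problem becomes the optimization of this functional over $\{Q\succ 0:\det Q=1\}$.

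Next I would compute the first variation to obtain the characterization. Varying the current image $K=A(P)$ along $x\mapsto(\Id+tE)x$ with $\operatorname{tr}E=0$ (so that volume is preserved to first order) and using the facet rule again gives $\frac{d}{dt}\big|_{0}\surf((\Id+tE)K)=-\operatorname{tr}(E\,M)$, where $M=\sum_i G_i\,w_i\otimes w_i$ and $G_i,w_i$ are the facet volumes and unit normals of $K$. This vanishes for every traceless $E$ if and only if $M$ is a scalar multiple of $\Id$; taking traces pins the scalar to $\surf(K)/n$, that is, $\sum_i \tfrac{n G_i}{\surf(K)}\,w_i\otimes w_i=\Id$. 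Thus $K$ is a critical position of the surface area in its affine class precisely when its surface area measure is isotropic, which is the equivalence asserted in the lemma.

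Finally I would settle existence, uniqueness, and the nature of the extremum. Existence comes from coercivity: at fixed volume a convex body that degenerates (becomes very thin in some direction, which happens when $A$ leaves every compact subset of $SL(n)$, since the normals positively span $\Re^n$) has arbitrarily large surface area, so $\surf(A(P))$ is proper and attains its infimum. For uniqueness and the extremum type I would use that $\surf$ is convex along the geodesics $t\mapsto e^{tE}$ (with $E$ symmetric, $\operatorname{tr}E=0$) of the symmetric space $SL(n)/SO(n)$: along such a curve $\surf(e^{tE}K)=\sum_i G_i\,\langle w_i,e^{-2tE}w_i\rangle^{1/2}$, and each $t\mapsto\langle w_i,e^{-2tE}w_i\rangle$ is a positive combination of exponentials, hence log-convex, so its square root is convex; the positive spanning of the normals makes the sum strictly convex for $E\neq0$. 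Consequently the unique critical point is the global extremizer, determined up to the stabilizer $O(n)$, that is, up to congruence. The main obstacle is precisely this convexity step, which upgrades the local optimality condition into a genuine extremal characterization; it also reveals that this extremal position is the one of minimal surface area in its affine class, the form in which the result appears in the work of Petty and of Giannopoulos--Papadimitrakis.
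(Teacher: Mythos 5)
Your proposal is necessarily a different route from the paper's, because the paper does not prove this lemma at all: it quotes it from Petty \cite{Petty} and Giannopoulos--Papadimitrakis \cite{Giannopoulos}. Your self-contained variational argument is correct and complete. The facet transformation rule is right, the reduction of $\surf(A(P))$ to $\sum_i F_i\langle u_i,Qu_i\rangle^{1/2}$ over positive definite $Q$ with $\det Q=1$ is the standard parametrization (one harmless transposition slip: since $|A^{-T}u|^2=u^TA^{-1}A^{-T}u$, you should take $Q=(A^TA)^{-1}$ rather than $(AA^T)^{-1}$; as $A$ ranges over $SL(n)$ both sweep the same set, so nothing is affected). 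The first-variation computation $\frac{d}{dt}\big|_0\surf((\Id+tE)K)=-\operatorname{tr}(EM)$ with $M=\sum_i G_i\,w_i\otimes w_i$ is correct, and since $M$ is symmetric, vanishing against all traceless $E$ does force $M=\frac{\surf(K)}{n}\Id$, which is exactly \eqref{eq:isotropic}. Your coercivity argument (the normals span $\Re^n$, so $\sum_i F_i\langle u_i,Qu_i\rangle^{1/2}\geq c\,\lambda_{\max}(Q)^{1/2}\to\infty$ as $A$ degenerates) gives existence, and the strict convexity of $t\mapsto\sum_i G_i\langle w_i,e^{-2tE}w_i\rangle^{1/2}$ along geodesics, via log-convexity of positive combinations of exponentials, correctly upgrades the critical-point characterization to a global one and yields uniqueness up to $O(n)$, i.e.\ up to congruence. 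This is in fact close in spirit to the argument in \cite{Giannopoulos}; what your write-up buys over the paper's bare citation is self-containedness and the explicit strict-convexity mechanism behind uniqueness.

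One substantive point deserves emphasis: the lemma as printed in the paper says $\surf(L(P))$ is \emph{maximal} in the affine class, but this cannot be literally true --- your own coercivity step shows the supremum of the surface area over the affine class is $+\infty$, so no maximizer exists. The correct statement, which your proof establishes, is that the isotropic position is the unique \emph{minimal} surface area position; this is the form proved by Petty and by Giannopoulos--Papadimitrakis, and it is also the form actually needed in Step 1 of the paper (maximizing $\vol_3(P)$ of the projection body at fixed $w(P)$, i.e.\ at fixed surface area of the pre-image polytope, forces that polytope into its minimal surface area position). You flagged exactly this in your closing sentence, which is the right call: the word ``maximal'' in the statement is an error in the paper, not a gap in your proof.
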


Any convex polytope satisfying the conditions in (\ref{eq:isotropic}) is said to be in \emph{surface isotropic position}.
We note that the volume of the projection body of any convex polyhedron is invariant under volume preserving linear transformations (cf. \cite{Petty2}).
On the other hand, from Cauchy's projection formula and the additivity of the support function (see. \cite{Gardner}) it follows that the projection body of the polytope in Lemma~\ref{lem:isotropic} is the zonotope $\sum_{i_1}^k [o,F_i u_i]$. Note that the solution to Minkowski's problem \cite{Schneider} states that some unit vectors $u_1, \ldots, u_k \in \Re^n$ and positive numbers $F_1, \ldots, F_k$ are the outer unit normals and volumes of the facets of a convex polytope if and only if the $u_i$s span $\Re^n$, and $\sum_{i=1}^k F_i u_i = o$. On the other hand, a translate of the parallelohedron $P$ in our investigation can be written in the form $\frac{1}{2}\sum_{1 \leq i < j \leq 4} [o, \pm \beta_{ij} v_i \times v_j]$, which, by the previous observation, is the projection body of a centrally symmetric polytope with outer unit facet normals $\pm \frac{v_i \times v_j}{|v_i \times v_j|}$, and volumes of the corresponding facets $\frac{\beta_{ij}}{2} |v_i \times v_j|$, where $1 \leq i < j \leq 4$.
Since $u \otimes u = (-u) \otimes (-u)$ for any $u \in \Re^n$, any solution $P$ to our optimization problem satisfies the conditions in (\ref{eq:isotropic}) with the vectors $\frac{v_i \times v_j}{|v_i \times v_j|}$ in place of the $u_i$s, and the quantities $\beta_{ij}  |v_i \times v_j|$ in place of the $F_i$s.
Thus, applying also the formula in Remark~\ref{rem:quermass_rep} for $w(P)$, it follows that
\begin{equation}\label{eq:isotropic_w}
\sum_{1 \leq i < j \leq 4} \frac{3 \beta_{ij}}{2w(P) |v_i \times v_j|} (v_i \times v_j) \otimes (v_i \times v_j) = \Id,
\end{equation}
which in the following we assume to hold for $P$.

Recall that $x \otimes y = x y^T$ and $\langle x,y \rangle = x^T y$ for any column vectors $x,y \in \Re^n$.
Hence, multiplying both sides in (\ref{eq:isotropic_w}) by $v_3^T$ from the left and $v_4$ from the right, it follows that
\[
\langle v_3, v_4 \rangle = \frac{3 \beta_{12}}{2 w(P) |v_1 \times v_2|} V_{123} V_{124}.
\]
Since $V_{123} = - V_{124} = 1$, we can express $\beta_{12}$ as $\beta_{12} = - \langle v_3, v_4 \rangle |v_1 \times v_2| \frac{2w(P)}{3}$. By symmetry, for any $\{i,j,s,t\} = \{ 1,2,3,4\}$ it follows that $\beta_{ij} = - \langle v_s, v_t \rangle |v_i \times v_j| \frac{2 w(P)}{3}$, where it may be worth noting that as $\beta_{ij} \geq 0$ holds for any value of $i,j$, if $P$ satisfies (\ref{eq:isotropic_w}), then $\langle v_s, v_t \rangle \leq 0$ for all $s,t$.
Set $\zeta_{ij} = - \langle v_s, v_t \rangle |v_i \times v_j| = \frac{3 \beta_{ij}}{2w(P)} $ for all $i,j$.
Substituting back these quantities into the formulas for $\vol_3(P)$ and $w(P)$ and simplifying, we may rewrite our optimization problem in the following form:
find the maximum of $\frac{27 \vol_3(P)}{\left(2 w(P) \right)^3} = f(\zeta_{12}, \ldots, \zeta_{34})$, where $\zeta_{ij}$ is defined as above, in the family of all centered tetrahedra $T$ with $\vol_3(T)=\frac{2}{3}$, under the condition that $\sum_{1 \leq i < j \leq 4} \zeta_{ij} |v_i \times v_j| = 3$. Here, it is worth noting that the last condition is satisfied for any centered tetrahedron $T$ with volume $\frac{2}{3}$ by Lemma~\ref{lem:tetrahedron_identities}, and thus, it is redundant.

\emph{Step 2}.\\
Consider the function
\[
f(\zeta_{12}, \ldots, \zeta_{34}), \hbox{ where } \zeta_{ij}= - \langle v_s, v_t \rangle |v_i \times v_j| \hbox{ for } \{i,j,s,t \} = \{ 1,2,3,4\},
\]
and $v_1,v_2,v_3,v_4$ are the vertices of a centered tetrahedron $T$ with volume $\frac{2}{3}$.
Set $\gamma_{ij} = - \langle v_s, v_t \rangle$ and $\tau_{ij}= \gamma_{ij} |v_i \times v_j|^2$ for all $\{i,j,s,t\} = \{ 1,2,3,4\}$.
To give an upper bound on the value of $f$, we apply the Cauchy-Schwartz Inequality, which states that for any nonnegative real numbers $x_i,y_i$, $i=1,2,\ldots, k$, we have
$\sum_{i=1}^k x_i y_i \leq \sqrt{\sum_{i=1}^k x_i^2} \sqrt{\sum_{i=1}^k y_i^2}$, with equality if and only if the $x_i$s and the $y_i$s are proportional.
To do this, we write each member of $f$ as the product $\zeta_{ij} \zeta_{kl} \zeta_{mn} = \sqrt{\gamma_{ij} \gamma_{kl} \gamma_{mn}} \sqrt{\tau_{ij} \tau_{kl} \tau_{mn}}$. Thus, we obtain that
\[
f(\zeta_{12}, \ldots, \zeta_{34}) \leq \sqrt{f(\gamma_{12}, \ldots, \gamma_{34})} \sqrt{f(\tau_{12}, \ldots, \tau_{34})} = \sqrt{f(\tau_{12}, \ldots, \tau_{34})},
\]
where we used the fact that by Lemma~\ref{lem:tetrahedron_identities}, $f(\gamma_{12}, \ldots, \gamma_{34}) = 1$ for all centered tetrahedra with volume $\frac{2}{3}$.
Furthermore, observe that by Lemma~\ref{lem:tetrahedron_identities} we have $\sum_{1 \leq i \leq j \leq 4} \tau_{ij} = 3$ for all such tetrahedra. Hence, 
by Lemma~\ref{lem:computations}, we have $f(\zeta_{12}, \ldots, \zeta_{34}) \leq \sqrt{2}$.

Now, assume that $f(\zeta_{12}, \ldots, \zeta_{34}) = \sqrt{2}$. Then, by Lemma~\ref{lem:computations}, $\tau_{ij} = \frac{1}{2}$ for all values $i \neq j$. This, by the Cauchy-Schwartz Inequality, implies that for some $t \in \Re$, $\gamma_{ij} = t$ for all values $i \neq j$. Since $T$ is centered, this implies that $\gamma_{ii} = 3t$ for all $i$s.
In other words, the Gram matrix of the vectors $v_1,\ldots,v_4$ is a scalar multiple of the matrix $4 \Id - E$, where $E$ is the matrix with all entries equal to $1$.
Since the Gram matrix of a vector system determines the vectors up to orthogonal transformations, and it is easy to check that $4 \Id - E$ is the Gram matrix of the vertex set of a centered regular tetrahedron of circumradius $\sqrt{3}$, the equality part in Theorem~\ref{thm:3} follows.

\section{Remarks and open problems}\label{sec:remarks}

First, in the next table we collect the results of our investigation for the minimal values of the mean widths of the different types of parallelohedra.

Recall (cf. Section~\ref{sec:prelim}) that the different types of parallelohedra correspond to the following polyhedra.

\begin{itemize}
\item Type (1) parallelohedra: parallelopipeds.
\item Type (2) parallelohedra: hexagonal prisms.
\item Type (3) parallelohedra: rhombic dodecahedra.
\item Type (4) parallelohedra: elongated rhombic dodecahedra.
\item Type (5) parallelohedra: truncated octahedra.
\end{itemize}

\begin{table}[h]
\begin{tabular}{>{\centering}m{0.1\textwidth}||>{\centering}m{0.25\textwidth}|>{\centering}m{0.5\textwidth}}
Type & Minimum of $w(P)$ & Optimal parallelohedra\tabularnewline
\hline \hline
(1) & $\frac{3}{2}$ & cube\tabularnewline
\hline
(2) & $\frac{3^{7/6}}{2^{4/3}} \approx 1.43$ & regular hexagon based right prism, with base and lateral edges of lengths $\frac{2^{2/3}}{3^{5/6}}$ and $\frac{3^{1/6}}{2^{1/3}}$, respectively \tabularnewline
\hline
(3) & $\frac{3^{1/2}}{2^{1/3}} \approx 1.37$ & regular rhombic dodecahedron with edge length $\frac{\sqrt{3}}{2^{4/3}}$\tabularnewline
\hline
(4) & $\geq \frac{3^{4/3}}{2^{5/3}} \approx 1.36$ & not known \tabularnewline
\hline
(5) & $\frac{3}{2^{7/6}} \approx 1.34$ & regular truncated octahedron of edge length $\frac{1}{2^{7/6}}$
\end{tabular}
\caption{Minima of the mean widths of different types of unit volume parallelohedra}
\end{table}

In the remaining part of this section we collect some old conjectures about $3$-dimensional parallelohedra, and propose some new ones.

The origin of the Honeycomb Conjecture, stating that in a decomposition of the Euclidean plane into regions of equal area, the regular hexagonal grid has the least perimeter, can be traced back to ancient times \cite{Varro}. This problem has been in the focus of research throughout the second half of the 20th century \cite{LFT, Morgan}, and was finally settled by Hales \cite{Hales1}.

The most famous analogous conjecture for mosaics in $3$-dimensional Euclidean space is due to Lord Kelvin \cite{Kelvin}, who in 1887 conjectured that in a tiling of space with cells of unit volume, the mosaic with minimal surface area is composed of slightly modified truncated octahedra. Even though this conjecture was disproved by Weaire and Phelan in 1994 \cite{Weaire}, who discovered a tiling of space with two slighly curved `polyhedra' of equal volume and with less total surface area than in Lord Kelvin's mosaic, the original problem of finding the mosaics with equal volume cells and minimal surface area has been extensively studied (see, e.g. \cite{Kusner, Gabbrielli, Oudet}). On the other hand, in the author's knowledge, there is no subfamily of mosaics for which Kelvin's problem is solved. 
This is our motivation for the following conjecture, where we note that a $3$-dimensional parallelohedron of unit volume has minimal surface area if and only if the same holds for the translative, convex mosaic generated by it.

\begin{conj}
Among $3$-dimensional, unit volume parallelohedra $P$, $\surf(P)$ is minimal if and only if $P$ is a regular truncated octahedron.
\end{conj}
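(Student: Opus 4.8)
The natural plan is to mirror the two-step architecture of the proof of Theorem~\ref{thm:3}, but now for the \emph{quadratic} (rather than linear) functional $\surf(P)$; the jump in degree is exactly where I expect the essential difficulty to lie, and why the statement is only a conjecture. As before, write $P=\sum_{1\le i<j\le 4}[o,\beta_{ij}(v_i\times v_j)]$ with $T=\conv\{v_1,\dots,v_4\}$ a centered tetrahedron of volume $\frac23$ and $\beta_{ij}\ge 0$, so that $\vol_3(P)=f(\beta_{12},\dots,\beta_{34})$ is cubic and $\surf(P)=q(\beta_{12},\dots,\beta_{34})$ is the quadratic form of \eqref{eq:surf_rep}. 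Since affine images of parallelohedra are parallelohedra, minimizing $\surf(P)$ at unit volume is equivalent to minimizing the scale-invariant ratio $\surf(P)^3/\vol_3(P)^2=q(\beta)^3/f(\beta)^2$ over all admissible pairs $(T,\beta)$, and the candidate minimizer, the regular truncated octahedron, corresponds to a regular $T$ together with all $\beta_{ij}$ equal.

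\emph{Step 1 (eliminate the $\beta_{ij}$).} By affine invariance the global minimizer is, within its own affine class, an extremizer of surface area at fixed volume, hence has isotropic surface area measure (Lemma~\ref{lem:isotropic}). The crucial difference from the mean-width problem is that there the isotropy lived on the auxiliary polytope whose projection body is $P$, with facet normals $\frac{v_i\times v_j}{|v_i\times v_j|}$, so contracting the isotropy identity with a pair $v_s,v_t$ isolated a single $\beta_{ij}$ and gave the clean formula $\beta_{ij}=-\langle v_s,v_t\rangle|v_i\times v_j|\cdot\mathrm{const}$. Here, by contrast, the facets of the parallelohedron itself are the hexagons, with normals $\pm v_i$ and areas $|v_i|(\beta_{ij}\beta_{ik}+\beta_{ij}\beta_{il}+\beta_{ik}\beta_{il})$, and the parallelograms, with normals $\pm(v_i+v_j)$ and areas $\beta_{ij}\beta_{kl}|v_i+v_j|$ (for disjoint pairs $\{i,j\},\{k,l\}$); so the isotropy identity couples all six $\beta_{ij}$ quadratically and cannot be solved for them in closed form. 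I would therefore, for each fixed centered tetrahedron $T$, minimize the quadratic-over-cubic ratio $q(\beta)^3/f(\beta)^2$ over $\beta\ge 0$ — using the Lagrange condition $3f(\beta)\,\nabla q(\beta)=2q(\beta)\,\nabla f(\beta)$ — to obtain a reduced function $\Phi(T)$, and check that at a regular $T$ its minimizer is the fully symmetric one.

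\emph{Step 2 (optimize over $T$).} It then remains to prove that $\Phi(T)$ is minimized, uniquely, by the regular tetrahedron. In the mean-width problem the corresponding reduced quantity was just $f(\zeta)$, which Cauchy--Schwarz factored as $f(\zeta)\le\sqrt{f(\gamma)}\sqrt{f(\tau)}$ and Lemmas~\ref{lem:tetrahedron_identities}--\ref{lem:computations} then pinned to its maximum at the regular configuration (the $4\times4$ Gram matrix proportional to $4\,\Id-E$, with $E$ the all-ones matrix). For the surface-area problem $\Phi(T)$ depends on the edge lengths $|v_i|$ and face-diagonal lengths $|v_i+v_j|$ of $T$, which are not polynomial in the Gram data $\langle v_i,v_j\rangle$; consequently the neat multiplicative factorization is unavailable, and I would instead try either to bound $q$ from below by a symmetric expression to which Lemma~\ref{lem:computations} still applies, or to run a direct constrained critical-point analysis on the space of centered tetrahedra of volume $\frac23$.

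The main obstacle is thus twofold. The quadratic dependence of $\surf(P)$ on the $\beta_{ij}$ destroys the linearization that made Step~1 of Theorem~\ref{thm:3} succeed, and the resulting tetrahedral functional $\Phi(T)$ is non-polynomial, so the polynomial identities of Lemma~\ref{lem:tetrahedron_identities} and the Lagrange computation of Lemma~\ref{lem:computations} no longer close the argument. One must moreover treat the lower combinatorial types separately, since setting some $\beta_{ij}=0$ degenerates the facet structure (hexagons collapse to parallelograms and the isotropy reduction changes form) — consistent with the accompanying table, where even the optimal type~(4) parallelohedron is currently unknown. Establishing the missing reverse-type inequality for the specific quadratic form $q$, uniformly over all centered tetrahedra and all such degenerations, is precisely what separates this conjecture from Theorem~\ref{thm:3}.
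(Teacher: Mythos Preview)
The paper does not prove this statement: it is stated as Conjecture~1 in Section~\ref{sec:remarks} (Remarks and open problems), with no proof or proof sketch attached. So there is no ``paper's own proof'' to compare your proposal against.

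Your write-up is consistent with this: you do not actually claim a proof, but rather outline how one would try to adapt the two-step argument of Theorem~\ref{thm:3} and then identify, correctly, where that adaptation breaks down --- the quadratic (rather than linear) dependence of $\surf(P)$ on the $\beta_{ij}$ prevents the isotropy identity from isolating the $\beta_{ij}$, and the resulting tetrahedral functional is non-polynomial in the Gram data, so Lemmas~\ref{lem:tetrahedron_identities} and \ref{lem:computations} no longer close the argument. That diagnosis is accurate and matches the paper's own implicit stance (the problem is left open, and even the optimal type~(4) shape is recorded as ``not known'' in the table).

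One technical correction: in Step~1 you invoke Lemma~\ref{lem:isotropic} to say that the global minimizer has isotropic surface area measure. But Lemma~\ref{lem:isotropic} characterizes the affine position in which surface area is \emph{maximal}, not minimal; the minimum of $\surf$ over a volume-preserving affine class is not attained (degenerate the body toward a lower-dimensional flat). What you actually want is the isotropy condition that follows from stationarity of the isoperimetric ratio $\surf(P)^3/\vol_3(P)^2$ under linear maps --- this is the same identity (\ref{eq:isotropic}), but the justification is different from the one you cite. This does not rescue the argument, of course; the obstructions you go on to describe remain.
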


The following conjecture, called Rhombic Dodecahedral Conjecture, can be found in \cite{Bezdek1} (see also \cite{Bezdek2}). We note that this conjecture is the lattice variant of the so-called Strong Dodecahedral Conjecture \cite{Bezdek3} proved by Hales in \cite{Hales3}. For other variants of the Dodecahedral Conjecture, see also \cite{BezdekLangi} or \cite{Hales4}.

\begin{conj}[Bezdek, 2000]
The surface area of any parallelohedron in $\Re^3$ with unit inradius is at least as large as $12 \sqrt{2} \approx 16.97$, which is the surface area of the regular rhombic dodecahedron of unit inradius. 
\end{conj}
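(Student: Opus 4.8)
The plan is to attack the conjecture with the same zonotope representation used for Theorem~\ref{thm:3}, replacing the mean width by the surface area (\ref{eq:surf_rep}) and using the inradius as the normalizing constraint. As before I would write $P=\sum_{1\le i<j\le 4}[o,\beta_{ij}(v_i\times v_j)]$ with $\sum_i v_i=o$ and $V_{123}=1$, so that $T=\conv\{v_1,v_2,v_3,v_4\}$ is a centered tetrahedron of volume $\frac23$; every parallelohedron, the regular rhombic dodecahedron included, arises this way for a suitable $T$ and suitable nonnegative $\beta_{ij}$ (with two \emph{opposite} parameters, say $\beta_{12}=\beta_{34}=0$, vanishing in the type~(3) case). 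Since the inradius scales linearly, the scale-invariant quantity to be minimized is $\surf(P)/r(P)^2$, and the claim is that its minimum over all parallelohedra equals the value $12\sqrt2$ attained by the regular rhombic dodecahedron.

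First I would obtain a workable expression for the inradius $r(P)$. Writing the centered translate of $P$ as a zonotope, its support function in a unit direction $u$ is $\frac12\sum_{i<j}\beta_{ij}\,|\langle v_i\times v_j,u\rangle|$, and $r(P)$ is the minimum of this over the outer unit normals of the facets of $P$. Each facet lies in a plane spanned by at least two of the generators; choosing two independent ones, say $v_i\times v_j$ and $v_k\times v_l$, its normal direction is, up to sign, $(v_i\times v_j)\times(v_k\times v_l)$, which by the triple-product identity stated before Remark~\ref{rem:quermass_rep} equals $V_{ijl}v_k-V_{ijk}v_l$. Feeding this into the support-function formula reduces the conjecture to a concrete constrained optimization in the variables $\beta_{ij}$ and the Gram data of $T$, with $\surf(P)$ taken from (\ref{eq:surf_rep}).

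The reduction should then proceed in two steps mirroring Section~\ref{sec:thm3}. In the first step I would fix $T$ and optimize over the $\beta_{ij}$; the key structural input is that at the optimum the insphere must touch \emph{every} facet, since otherwise one could shrink the non-touching facets, lowering $\surf(P)$ while keeping $r(P)=1$. A parallelohedron whose insphere touches all facets satisfies $\vol_3(P)=\frac13 r(P)\surf(P)$, and this tangency condition, written through the support-function formula above, should pin down the $\beta_{ij}$ in terms of $T$ much as the surface-isotropic identity (\ref{eq:isotropic_w}) did in Step~1 of Theorem~\ref{thm:3}. In the second step I would eliminate $T$: using the substitution $\gamma_{ij}=-\langle v_s,v_t\rangle$, $\tau_{ij}=\gamma_{ij}|v_i\times v_j|^2$, the identities of Lemma~\ref{lem:tetrahedron_identities}, and a Cauchy--Schwarz estimate as in Step~2, I would bound the resulting functional by a value attained when $T$ is regular, and finish by a direct computation over the five combinatorial types, invoking Lemma~\ref{lem:computations} to locate the extremal $\beta$-pattern and checking that the type~(3) pattern wins with ratio $12\sqrt2$.

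The hard part is the first step, and specifically the fact that, unlike the mean width, the inradius is \emph{not} an affine invariant: the proof of Theorem~\ref{thm:3} leaned decisively on Lemma~\ref{lem:isotropic} and on the affine invariance of the projection body, neither of which is available here. Consequently there is no clean canonical position decoupling the $\beta_{ij}$ from $T$, and the tangency condition becomes a genuinely Euclidean system whose solution set is awkward to parametrize; showing that the optimal tangential parallelohedron over a fixed $T$ is unique and varies smoothly with the Gram data is, I expect, the crux, after which the reduction to the regular tetrahedron and the final case analysis should be routine adaptations of Lemmas~\ref{lem:tetrahedron_identities} and~\ref{lem:computations}. Because the statement is the lattice analogue of Hales's Strong Dodecahedral Theorem, it is also plausible that the tangential sub-problem is not fully resolvable by elementary means and must import the local packing estimates of \cite{Hales3}; determining precisely how much of that machinery is unavoidable is, in my view, the principal obstacle.
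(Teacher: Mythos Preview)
The statement you are addressing is a \emph{conjecture}, listed by the paper in Section~\ref{sec:remarks} among open problems; the paper offers no proof of it, so there is nothing to compare your attempt against. What you have written is not a proof but a research outline, and you yourself flag this: you identify the crux (the first step, where the inradius constraint replaces the affine-invariant mean-width constraint and Lemma~\ref{lem:isotropic} is no longer available), and you concede that you do not know how to carry it out, speculating that machinery from \cite{Hales3} may be unavoidable.

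To be concrete about why the outline does not constitute a proof: the decisive input in the paper's proof of Theorem~\ref{thm:3} is that at the optimum the surface-isotropic condition (\ref{eq:isotropic_w}) \emph{solves} for the $\beta_{ij}$ explicitly in terms of the Gram data of $T$, which is what makes Step~2 a finite computation. Your proposed replacement, the tangency condition ``the insphere touches every facet,'' does not do this: it is a system of inequalities-turned-equalities in the $\beta_{ij}$ and the Gram data whose solution you neither write down nor show to be unique, and the subsequent Cauchy--Schwarz step has no identified pair of factors to act on. Moreover, even the reduction to tangential parallelohedra is not free: shrinking a non-touching facet of a zonotope means decreasing some $\beta_{ij}$, which moves \emph{several} facets at once and can destroy tangency elsewhere, so the monotonicity argument you sketch needs justification. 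Until these points are resolved, the proposal remains a plausible plan of attack on an open problem rather than a proof.
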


As it was mentioned in the introduction, Voronoi cells of lattice packings of congruent balls is an important subfamily of $3$-dimensional parallelohedra.
Regarding this subclass, Dar\'oczy \cite{Daroczy} gave an example of a packing whose Voronoi cells have a smaller mean width than that of the regular rhombic dodecahedron of the same inradius. This is the motivation behind our last question. Here we note that the mean width of a cube, a regular truncated octahedron and a regular rhombic dodecahedron of unit inradius is $3 > \sqrt{6} = \sqrt{6}$, respectively, which, in our opinion, makes the question indeed intriguing.

\begin{prob}
Find the minimum of the mean widths of $3$-dimensional parallelohedra of unit inradius.
\end{prob}

\textbf{Acknowledgements}\\
The author is indebted to A. Jo\'os and G. Domokos for many fruitful discussions of this problem, and to an unkown referee for helpful suggestions.

\end{document}